\newtheorem{teo}{Theorem}[section]
\newtheorem{pro}[teo]{Proposition}
\newtheorem{ex}{Example}[section]
\begin{document}

\renewcommand{\baselinestretch}{2}	
	
	\title{The stopped clock model}

	\author{Helena Ferreira}
	\affil{Universidade da Beira Interior, Centro de Matem\'{a}tica e Aplica\c{c}\~oes (CMA-UBI), Avenida Marqu\^es d'Avila e Bolama, 6200-001 Covilh\~a, Portugal\\ \texttt{helena.ferreira@ubi.pt}}
	
	\author{Marta Ferreira}
	\affil{Center of Mathematics of Minho University\\ Center for Computational and Stochastic Mathematics of University of Lisbon\\
		Center of Statistics and Applications of University of Lisbon, Portugal\\ \texttt{msferreira@math.uminho.pt} }

	\date{}
	
	\maketitle

\abstract{The extreme values theory presents specific tools for modeling and predicting extreme phenomena. In particular, risk assessment is often analyzed through measures for tail dependence and high values clustering. Despite technological advances allowing an increasingly larger and more efficient data collection, there are sometimes failures in the records, which causes difficulties in statistical inference, especially in the tail where data are scarcer. In this article we present a model with a simple and intuitive failures scheme, where each record failure is replaced by the last record available. We will study its extremal behavior with regard to local dependence and high values clustering, as well as the temporal dependence on the tail.}

\bigskip

\noindent\textbf{keywords:} {extreme values; stationary sequences; failures model; extremal index; tail dependence coefficient.}\\

\noindent\textbf{AMS 2000 Subject Classification}: 60G70\\

\section{Introduction}\label{sintro}


 Let $\{X_n\}_{n\in\mathbb{Z}}$ and $\{U_n\}_{n\in\mathbb{Z}}$ be stationary sequences of real random variables  on the probability space $(\Omega, {\cal{A}}, P)$ and $P(U_n\in\{0,1\})=1$. We  define, for $n\geq 1$,
\begin{eqnarray}\label{Yn}
Y_n=\left\{
\begin{array}{ll}
X_n & ,\, U_n=1\\
Y_{n-1} & ,\, U_n=0\,.
\end{array}
\right.
\end{eqnarray}
Sequence $\{Y_n\}_{n\geq 1}$ corresponds to a model of failures on records of $\{X_n\}_{n\in\mathbb{Z}}$ replaced by the last available record, which occurs in some random past instant, if we interpret $n$ as time. Thus, if for example it occurs $\{U_1=1,U_2=0,U_3=1,U_4=0,U_5=0,U_6=0,U_7=1\}$, we will have $\{Y_1=X_1,Y_2=X_1,Y_3=X_3,Y_4=X_3,Y_5=X_3,Y_6=X_3,Y_7=X_7\}$. This constancy of some variables of $\{X_n\}_{n\in\mathbb{Z}}$ for random periods of time motivates the designation of ``stopped clock model" for sequence $\{Y_n\}_{n\geq 1}$. 

Failure models studied in the literature from the point of view of extremal behavior do not consider the stopped clock model (Hall and H\"usler, \cite{HallHusler2006} 2006; Ferreira \emph{et al.}, \cite{FerreiraH++2019} 2019 and references therein). 

The model we will study can also be represented by $\{X_{N_n}\}_{n\geq 1}$ where $\{N_n\}_{n\geq 1}$ is a sequence of positive integer variables representable by 
\begin{eqnarray}\nonumber
N_n=nU_n+\sum_{i\geq 1}\left(\prod_{j=0}^{i-1}(1-U_{n-j})\right)U_{n-i}(n-i),\,n\geq 1\,.
\end{eqnarray}

We can also state a recursive formulation for $\{Y_n\}_{n\geq 1}$ through
\begin{eqnarray}\nonumber
Y_n=X_nU_n+\sum_{i\geq 1}\left(\prod_{j=0}^{i-1}(1-U_{n-j})\right)U_{n-i}X_{n-i}+\prod_{i\geq 0}(1-U_{n-i})Y_{n-\kappa},\,n\geq 1,\,\,\,\kappa\geq 1\,.
\end{eqnarray}

Under any of the three possible representations (failures model, random index sequence or recursive sequence), we are not aware of an extremal behavior study of $\{Y_n\}_{n\geq 1}$ in the literature.

Our departure hypotheses about the base sequence $\{X_n\}_{n\in\mathbb{Z}}$ and about sequence $\{U_n\}_{n\in\mathbb{Z}}$ are:

\begin{enumerate}
	\item[(1)] $\{X_n\}_{n\in\mathbb{Z}}$ is a stationary sequence of random variables almost surely distinct and, without loss of generality, such that $F_{X_n}(x):=F(x)=\exp(-1/x)$, $x>0$, i.e., standard Fr\'echet distributed.
	\item[(2)] $\{X_n\}_{n\in\mathbb{Z}}$ and $\{U_n\}_{n\in\mathbb{Z}}$ are independent.
	\item[(3)] $\{U_n\}_{n\in\mathbb{Z}}$ is stationary and $p_{n_1,...,n_s}(i_1,...,i_s):=P(U_{n_1}=i_{1},...,U_{n_s}=i_s)$, $i_j\in\{0,1\}$, $j=1,...,s$, is such that $p_{n,n+1,...,n+\kappa-1}(0,...,0)=0$, for some $\kappa\geq 1$.
\end{enumerate}
The trivial case $\kappa=1$ corresponds to $Y_n=X_n$, $n\geq 1$. Hypothesis (3) means that we are assuming that it is almost impossible to lose $\kappa$ or more consecutive values of $\{X_n\}_{n\in\mathbb{Z}}$.
We remark that, along the paper, the summations, produts and  intersections is considered to be non-existent whenever the end of the counter is less than the beginning. We will also use notation $a\vee b=\max(a,b)$. 

\begin{ex}\label{ex1}
	Consider an independent and identically distributed sequence $\{W_n\}_{n\in\mathbb{Z}}$ of real random variables on  $(\Omega, {\cal{A}}, P)$ and a Borelian set $A$. Let $p=P(A_n)$ where $A_n=\{W_n\in A\}$, $n\in\mathbb{Z}$.
The sequence of Bernoulli random variables 
\begin{eqnarray}\label{exUn}
U_n=\mathbf{1}_{\{\bigcap_{i=1}^{\kappa-1}\overline{A}_{n-i}\}}+(1-\mathbf{1}_{\{\bigcap_{i=1}^{\kappa-1}\overline{A}_{n-i}\}})\mathbf{1}_{\{A_{n}\}},\,{n\in\mathbb{Z}},
\end{eqnarray}
where $\mathbf{1}_{\{\cdot\}}$ denotes the indicator function, defined for some fixed $\kappa\geq 2$,
 is such that $p_{n,n+1,...,n+\kappa-1}(0,...,0)=0$, i.e., it is almost sure that after $\kappa-1$ consecutive variables equal to zero, the next variable takes value one. 
In fact,  for any choice of $\kappa \geq 2$, 
\begin{eqnarray}\nonumber
\begin{array}{rl}
p_{n,n+1,...,n+\kappa-1}(0,...,0)&=P(U_n=0=U_{n+1}=...=U_{n+\kappa-1}) \\
&\leq P\left(\{\displaystyle\bigcap_{i=n}^{n+\kappa-2}\overline{A}_{i}\} \cap  \{U_{n+\kappa-1}=0\}\right)\\
&=P\left(\mathbf{1}_{\{\bigcap_{i=1}^{\kappa-1}\overline{A}_{n+\kappa-1-i}\}}=1, \{U_{n+\kappa-1}=0\}\right)=0.
\end{array}
\end{eqnarray}
We also have 
\begin{eqnarray}\nonumber
\begin{array}{rl}
p_n(0)=&P\left(\mathbf{1}_{\{\bigcap_{i=1}^{\kappa-1}\overline{A}_{n-i}\}}=0,\mathbf{1}_{\{A_{n}\}}=0\right)=P(\bigcup_{i=1}^{\kappa-1}A_{n-i}\cap\overline{A}_n)\\
=&P(\overline{A}_n)-P(\bigcap_{i=0}^{\kappa-1}\overline{A}_{n-i})=1-p-(1-p)^\kappa,
\end{array}
\end{eqnarray}
since the independence of random variables $W_n$ implies the independence of events $A_n$, and, for $\kappa>2$,
\begin{eqnarray}\nonumber
\begin{array}{rl}
p_{n-1,n}(0,0)=&P((\bigcup_{i=1}^{\kappa-1}A_{n-i}\cap\overline{A}_n)\cap (\bigcup_{i=1}^{\kappa-1}A_{n-1-i}\cap\overline{A}_{n-1}))\\
=&P(\overline{A}_n\cap\overline{A}_{n-1})-P(\overline{A}_n\cap\overline{A}_{n-1}\cap(\bigcap_{i=1}^{\kappa-1}\overline{A}_{n-i}\cup \bigcap_{i=1}^{\kappa-1}\overline{A}_{n-1-i}))\\
=&P(\overline{A}_n)P(\overline{A}_{n-1})-P(\bigcap_{i=0}^{\kappa-1}\overline{A}_{n-i})=(1-p)^2-(1-p)^\kappa,
\end{array}
\end{eqnarray}
$p_{n-1,n}(1,0)=p_n(0)-p_{n-1,n}(0,0)=p(1-p).$

In Figure \ref{FigYn} we illustrate with a particular example based on independent standard Fr\'echet $\{X_n\}_{n\in\mathbb{Z}}$,   $\{W_n\}_{n\in\mathbb{Z}}$ with standard exponential marginals, $A=]0,1/2]$ and thus $p=0.3935$ and considering $\kappa=3$. Therefore, $p_{n,n+1,n+2}(0,0,0)=0$, $p_{n,n+1,n+2}(1,0,0)=p_{n,n+1}(0,0)=p(1-p)^2$.

\begin{figure}
	\centering
	\includegraphics[width=7cm,height=7cm]{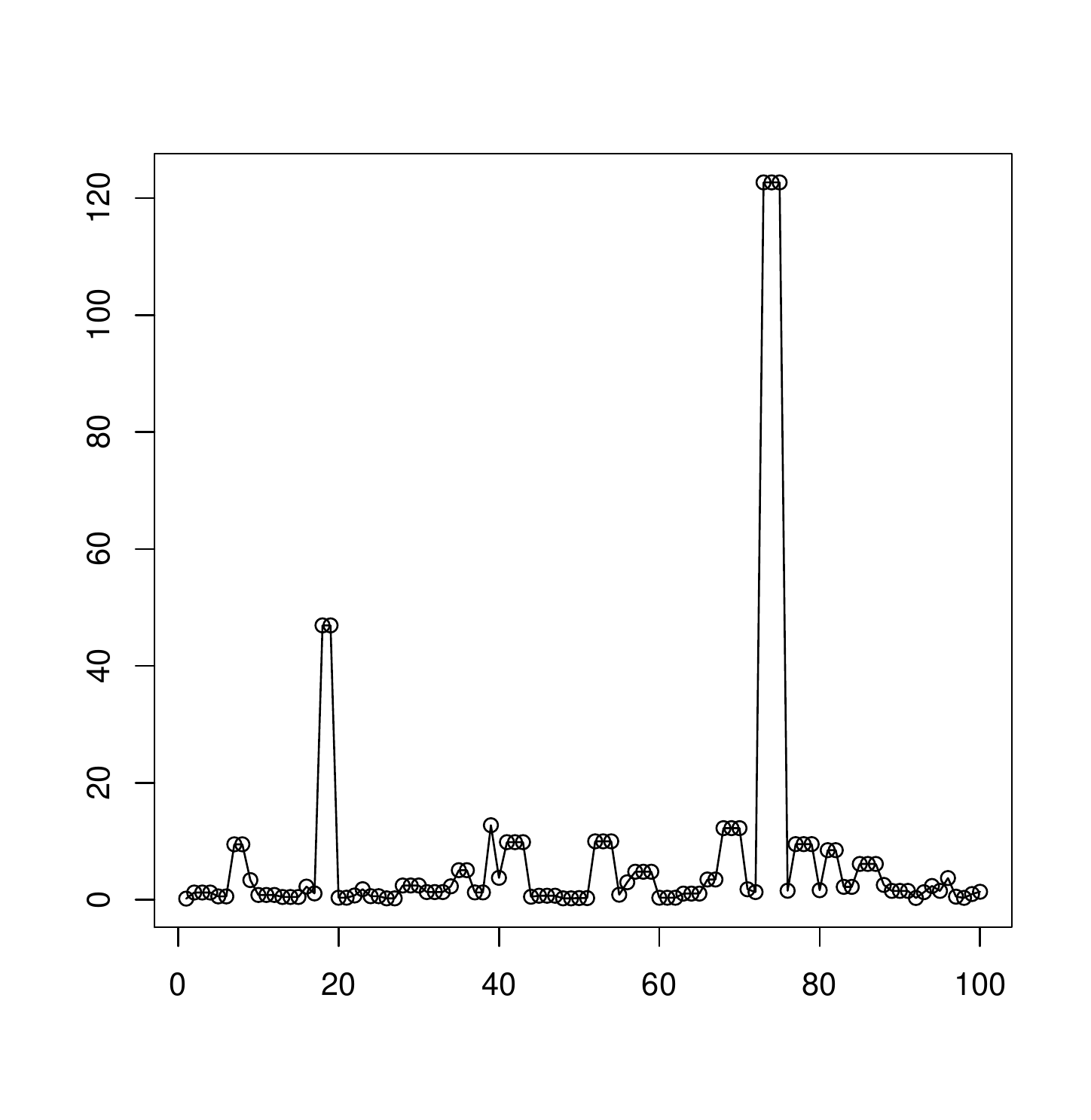}
	\caption{Sample path of $100$ observations simulated from $\{Y_n\}$ defined in (\ref{Yn}) based on independent standard Fr\'echet $\{X_n\}$ and on $\{U_n\}$ given in (\ref{exUn}) where we take random variables $\{W_n\}$ standard exponential distributed, $A=]0,1/2]$ and thus $p=0.3935$ and considering $\kappa=3$.\label{FigYn}}
\end{figure}
\end{ex}
\vspace{0,5cm}

In the next section we propose an estimator for probabilities $p_{n,...,n+s}(1,0,...,0)$, $0\leq s<\kappa-1$. In Section \ref{sextremalindex} we analyse the existence of the extremal index for $\{Y_n\}_{n\geq 1}$, an important measure to evaluate the tendency to occur clusters of its high values. A characterization of the tail dependence will be presented in Section \ref{stdc}. The results are illustrated with an ARMAX sequence.

For the sake of simplicity, we will omit the variation of $n$ in sequence notation whenever there is no doubt, taking into account that  we will keep the designation $\{Y_n\} $ for the stopped clock model and $\{X_n\}$ and $\{U_n\}$ for the sequences that generate it.

\section{Inference on $\{U_n\} $}\label{sinfer}

Assuming that $\{U_n\} $ is not observable, as well as the values of $\{X_n\} $ that are lost, it is of interest to retrieve information about these sequences from the available sequence $\{Y_n\} $. 

Since, for $n\geq 1$ and $s\geq 1$, we have
\begin{eqnarray}\nonumber
\begin{array}{rl}
&\displaystyle p_n(1)=E\left(\mathbf{1}_{\{Y_n\not = Y_{n-1}\}}\right),\,\, p_n(0)=E\left(\mathbf{1}_{\{Y_n = Y_{n-1}\}}\right)\\
 \textrm{ and }& \displaystyle p_{n-s,n-s+1,...,n}(1,0,...,0)=E\left(\mathbf{1}_{\{Y_{n-s-1}\not =Y_{n-s}=Y_{n-s+1}=...= Y_{n}\}}\right),
\end{array}
\end{eqnarray}
we propose to estimate these probabilities from the respective empirical counterparts of a random sample $(\hat{Y}_1, \hat{Y}_2,...,\hat{Y}_m)$ from $\{Y_n\} $, i.e.,  
\begin{eqnarray}\nonumber
\begin{array}{rl}
&\displaystyle \widehat{p}_n(1)=\frac{1}{m}\sum_{i=2}^{m}\mathbf{1}_{\{\hat{Y}_i\not = \hat{Y}_{i-1}\}},\,\, \widehat{p}_n(0)=\frac{1}{m}\sum_{i=2}^{m}\mathbf{1}_{\{\hat{Y}_i = \hat{Y}_{i-1}\}}\\
\textrm{ and }& \displaystyle \widehat{p}_{n-s,n-s+1,...,n}(1,0,...,0)=\frac{1}{m}\sum_{i=s+2}^{m}\mathbf{1}_{\{\hat{Y}_{i-s-1}\not =\hat{Y}_{i-s}=\hat{Y}_{i-s+1}=...= \hat{Y}_{i}\}}\,,
\end{array}
\end{eqnarray}
which are consistent by the {\it weak law of large numbers}. 
The value of $\kappa$ can be inferred from 
\begin{eqnarray}\nonumber
\widehat{\kappa}=\bigvee_{i=s+2}^{m}\,\,\bigvee_{s\geq 1} s \,\,\mathbf{1}_{\{\hat{Y}_{i-s-1}\not =\hat{Y}_{i-s}=\hat{Y}_{i-s+1}=...= \hat{Y}_{i}\}}\,.
\end{eqnarray}

In order to evaluate the finite sample behavior of the estimators above, we have simulated $1000$ independent replicas with size $m=100,1000,5000$ of the model in Example \ref{ex1}. The absolute bias (abias) and root mean squared error (rmse) are presented in Table \ref{tab1}. The results reveal a good performance of the estimators, even in the case of smaller sample sizes. Parameter $\kappa$ was always estimated with no error. 

\begin{table}
	\caption{The absolute bias (abias) and root mean squared error (rmse) obtained from 1000 simulated samples with size $m=100,1000,5000$ of the model in Example \ref{ex1}. \label{tab1}}	
\begin{center}	
	
	\begin{tabular}{ll|cc}
		
		& & abias & rmse\\
		\hline
 & $m=100$ & 0.0272 & 0.0335 \\
$\widehat{p}_n(0)$ & $m=1000$ & 0.0087 & 0.0108 \\
 & $m=5000$ & 0.0039 & 0.0048 \\
\hline
 & $m=100$ & 0.0199 & 0.0253 \\
$\widehat{p}_{n-1,n}(1,0)$ & $m=1000$ & 0.0065 & 0.0080 \\
 & $m=5000$ & 0.0030 & 0.0037 \\
\hline
 & $m=100$ & 0.0160 & 0.0200\\
$\widehat{p}_{n-2,n-1,n}(1,0,0)$ & $m=1000$ & 0.0051 & 0.0064\\
 & $m=5000$ & 0.0022 & 0.0028\\
	\end{tabular}
\end{center}
\end{table}

\section{The extremal index of $\{Y_n\} $}\label{sextremalindex}

The sequence $\{Y_n\} $ is stationary because the sequences $\{X_n\} $ and $\{U_n\} $ are stationary and independent from each other. In addition, the common distribution for $Y_n$, $n \geq 1$, is also standard Fr\'echet, as is the common distribution for $X_n$, since
\begin{eqnarray}\nonumber
\begin{array}{rl}
F_{Y_n}(x)=&\displaystyle \sum_{i=1}^{\kappa-1}P(X_{n-i}\leq x,U_{n-i}=1,U_{n-i+1}=0=...=U_n)+P(X_n\leq x)P(U_n=1)\\
=& \displaystyle F(x)\left(p_n(1)+\sum_{i=1}^{\kappa-1}p_{n-i,...,n}(1,0,...,0)\right)=F(x)\,.
\end{array}
\end{eqnarray}
For any $\tau>0$, if we define $u_n\equiv u_n(\tau)=n/\tau$, $n\geq 1$,
it turns out that $E\left(\sum_{i=1}^{n}\mathbf{1}_{\{Y_i>u_n\}}\right)=nP(Y_1>u_n)
\displaystyle\mathop{\longrightarrow}_{n\to\infty}\tau$ and  $nP(X_1>u_n)\displaystyle\mathop{\longrightarrow}_{n\to\infty}\tau$, so we refer to these levels $u_n$ by {\it normalized levels} for $\{Y_n\} $ and $\{X_n\} $.

In this section, in addition to the general assumptions about the model presented in Section \ref{sintro}, we start by assuming that $\{X_n\}$ and $\{U_n\}$ present dependency structures such that variables sufficiently apart can be considered approximately independent. Concretely, we assume that $\{U_n\} $ satisfies the strong-mixing condition (Rosenblat \cite{Rosenblat1956} 1956) and $\{X_n\} $ satisfies condition $D(u_n)$ (Leadbetter \cite{Leadbetter1974} 1974) for normalized levels $u_n$.

\begin{pro}\label{pD}
	If $\{U_n\} $ is strong-mixing and $\{X_n\} $ satisfies condition $D(u_n)$ then $\{Y_n\} $ also satisfies condition $D(u_n)$.
\end{pro}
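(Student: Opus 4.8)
The plan is to exploit the finite-memory structure of $\{Y_n\}$ together with the independence of $\{X_n\}$ and $\{U_n\}$, reducing the mixing of $\{Y_n\}$ to the separate mixing of the two generating sequences. Because hypothesis (3) forbids $\kappa$ consecutive failures, for each $i$ there is a lag $R_i=\min\{r\ge 0:U_{i-r}=1\}\in\{0,\dots,\kappa-1\}$, measurable with respect to $\sigma(U_{i-\kappa+1},\dots,U_i)$, such that $Y_i=X_{i-R_i}$. Hence, for any index set $A=\{i_1<\cdots<i_p\}$, the event $E_A:=\{\max_{i\in A}Y_i\le u_n\}$ is measurable with respect to the $X$'s and $U$'s indexed in the window $A^\ast:=\bigcup_{i\in A}\{i-\kappa+1,\dots,i\}$. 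The first step is to record this and to note the \emph{gap inflation}: if $A$ and $B$ (with $\max A<\min B$) are separated by a gap at least $\ell$, then their windows $A^\ast,B^\ast$ are separated by a gap at least $\ell-\kappa+1$.

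Second, I would condition on the whole sequence $\{U_n\}$. Given a realisation $u$ of $\{U_n\}$, the lags $R_i$ become deterministic and $E_A$ reduces to $\{X_k\le u_n:\ k\in K_A(u)\}$, where $K_A(u)=\{i-R_i(u):i\in A\}\subseteq A^\ast$, and similarly for $B$. Since $\{X_n\}$ is independent of $\{U_n\}$, conditioning on $u$ leaves the law of $\{X_n\}$ unchanged, so $P(E_A\cap E_B\mid U=u)=P(X_k\le u_n,\ k\in K_A(u)\cup K_B(u))$ and the factors $P(E_A\mid U=u)$, $P(E_B\mid U=u)$ are the corresponding marginal $X$-probabilities. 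Crucially $\max K_A(u)\le \max A$ and $\min K_B(u)\ge \min B-\kappa+1$, so $K_A(u)$ and $K_B(u)$ are still separated by a gap at least $\ell-\kappa+1$. Applying condition $D(u_n)$ for $\{X_n\}$ — whose mixing coefficient $\alpha^X_{n,m}$ is uniform over the index configuration and the number of indices — gives, pointwise in $u$,
\begin{eqnarray}\nonumber
\big|P(E_A\cap E_B\mid U=u)-P(E_A\mid U=u)\,P(E_B\mid U=u)\big|\le \alpha^X_{n,\ell-\kappa+1}.
\end{eqnarray}
Integrating over $u$ yields $\big|P(E_A\cap E_B)-E(g_A(U)g_B(U))\big|\le\alpha^X_{n,\ell-\kappa+1}$, where $g_A(U):=P(E_A\mid U)$ and $g_B(U):=P(E_B\mid U)$ are $[0,1]$-valued variables, measurable with respect to $\sigma(U_i:i\in A^\ast)$ and $\sigma(U_j:j\in B^\ast)$ respectively.

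Third, it remains to replace $E(g_A(U)g_B(U))$ by $E(g_A(U))\,E(g_B(U))=P(E_A)P(E_B)$. Their difference is exactly $\mathrm{Cov}(g_A(U),g_B(U))$, a covariance between a bounded function of the $U$'s in $A^\ast$ and a bounded function of the $U$'s in $B^\ast$, these two blocks again being $\ell-\kappa+1$ apart. The strong-mixing assumption on $\{U_n\}$, through the classical covariance inequality for bounded functions, bounds this by $4\,\alpha^U_{\ell-\kappa+1}$, with $\alpha^U$ the strong-mixing coefficient. Combining the two estimates yields the uniform bound $\alpha^Y_{n,\ell}\le \alpha^X_{n,\ell-\kappa+1}+4\,\alpha^U_{\ell-\kappa+1}$. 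Finally, starting from a sequence $\ell^X_n=o(n)$ witnessing $D(u_n)$ for $\{X_n\}$, I would set $\ell_n=\max(\ell^X_n,\lfloor\log n\rfloor)+\kappa=o(n)$, so that both $\alpha^X_{n,\ell_n-\kappa+1}\to 0$ and $\alpha^U_{\ell_n-\kappa+1}\to 0$, which establishes $D(u_n)$ for $\{Y_n\}$.

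The main obstacle, and the delicate point, is the second step: the index sets carrying the $X$-event depend on the realisation of $U$, so $D(u_n)$ cannot be applied unconditionally. The argument goes through only because $D(u_n)$ is uniform over all admissible index configurations and because $X\perp U$ makes the conditional law of $\{X_n\}$ coincide with its unconditional law; one must also verify that the windowing never destroys the separation, which is why the gap is only degraded by the fixed amount $\kappa-1$. The clean splitting into an $X$-error controlled by $D(u_n)$ and a $U$-error controlled by strong mixing is precisely what lets the two hypotheses be invoked independently.
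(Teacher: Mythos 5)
Your proof is correct, and it rests on the same two pillars as the paper's: the finite-memory observation that $Y_i=X_{i-R_i}$ with $R_i\in\{0,\dots,\kappa-1\}$, so that the gap between blocks only degrades from $\ell$ to $\ell-\kappa+1$, and the splitting of the error into an $X$-part controlled by $D(u_n)$ and a $U$-part controlled by strong mixing. The execution differs in one substantive way. The paper expands $P\bigl(\bigcap_s Y_{i_s}\le u_n,\bigcap_s Y_{j_s}\le u_n\bigr)$ as a \emph{sum} over all realisations $(i_s^*,j_s^*)$ of the lags, bounds each summand by $\alpha_{n,l}+g(l)$, and concludes; that sum has $\kappa^{p+q}$ terms, so the resulting bound carries a combinatorial prefactor depending on the number of indices, whereas condition $D(u_n)$ requires a bound uniform over all index configurations. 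Your conditioning on $\{U_n\}$ replaces that sum by an integral against the law of $U$: the configuration probabilities are absorbed as weights summing to one, the conditional application of $D(u_n)$ (legitimate because $X\perp U$ keeps the conditional law of $\{X_n\}$ unchanged and the bound is uniform over index sets) gives $\alpha^X_{n,\ell-\kappa+1}$ after integration, and the residual term is exactly $\mathrm{Cov}(g_A(U),g_B(U))$, handled by the covariance inequality for bounded strongly mixing variables. The final bound $\alpha^X_{n,\ell-\kappa+1}+4\alpha^U_{\ell-\kappa+1}$ is uniform in $p$ and $q$, so your argument actually closes a step the paper leaves implicit. The only cosmetic caveat is that $K_A(u)$ may reach indices below $1$; stationarity of $\{X_n\}$ disposes of this, as it does in the paper's version.
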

\begin{proof}
	For any choice of $p+q$ integers, $1\leq i_1<...<i_p<j_1<...<j_q\leq n$ such that $j_1\geq i_p+l$, we have that
	\begin{eqnarray}\nonumber
	\left|P\left(\bigcap_{s=1}^p X_{i_s}\leq u_n,\bigcap_{s=1}^q X_{j_s}\leq u_n\right)-P\left(\bigcap_{s=1}^p X_{i_s}\leq u_n\right)P\left(\bigcap_{s=1}^q X_{j_s}\leq u_n\right)\right|\leq \alpha_{n,l},
	\end{eqnarray}
	with $\alpha_{n,l_n}\to 0$, as $n\to\infty$, for some sequence $l_n=o(n)$, and
		\begin{eqnarray}\nonumber
	\left|P\left(A\cap B\right)-P\left(A\right)P\left(B\right)\right|\leq g(l),
	\end{eqnarray}
	with $g(l)\to 0$, as $l\to\infty$, where $A$ belongs to the $\sigma$-algebra generated by $\{U_i,\,i=1,...,i_p\}$ and $B$ belongs to the $\sigma$-algebra generated by $\{U_i,\,i=j_1,j_1+1,...\}$. Thus, for any choice of $p+q$ integers, $1\leq i_1<...<i_p<j_1<...<j_q\leq n$ such that $j_1\geq i_p+l+\kappa$, we will have 
	\begin{eqnarray}\nonumber
	\begin{array}{rl}
	&\displaystyle \left|P\left(\bigcap_{s=1}^p Y_{i_s}\leq u_n,\bigcap_{s=1}^q Y_{j_s}\leq u_n\right)-P\left(\bigcap_{s=1}^p Y_{i_s}\leq u_n\right)P\left(\bigcap_{s=1}^q Y_{j_s}\leq u_n\right)\right|\\\\
	\leq & \displaystyle \sum_{\substack{i_s-\kappa<i^*_s\leq i_s\\ j_s-\kappa<j^*_s\leq j_s}}\left|P\left(\bigcap_{s=1}^p X_{i_s^*}\leq u_n,\bigcap_{s=1}^q X_{j_s^*}\leq u_n\right)P\left(A^*\cap B^*\right)\right.\\
	&\hspace{1.8cm}\displaystyle \left.-P\left(\bigcap_{s=1}^p X_{i_s^*}\leq u_n\right)P\left(\bigcap_{s=1}^q X_{j_s^*}\leq u_n\right)P\left(A^*\right)P\left(B^*\right)\right|,
	\end{array}
	\end{eqnarray}
	where $A^*=\bigcap_{s=1}^p \{U_{i_s}=0=...=U_{i^*_s+1},U_{i^*_s}=1\}$ and $B^*=\bigcap_{s=1}^q \{U_{j_s}=0=...=U_{j^*_s+1},U_{j^*_s}=1\}$	and $j^*_1>j_1-\kappa\geq i^*_p+l$. Therefore, the last summation above is upper limited by
	\begin{eqnarray}\nonumber
	\begin{array}{rl}
	& \displaystyle \sum_{\substack{i_s-\kappa<i^*_s\leq i_s\\ j_s-\kappa<j^*_s\leq j_s}}\Bigg(\left|P\left(\bigcap_{s=1}^p X_{i_s^*}\leq u_n,\bigcap_{s=1}^q X_{j_s^*}\leq u_n\right)-P\left(\bigcap_{s=1}^p X_{i_s^*}\leq u_n\right)P\left(\bigcap_{s=1}^q X_{j_s^*}\leq u_n\right)\right|\\
	&\hspace{1.8cm}\displaystyle + |P\left(A^*\cap B^*\right)- P\left(A^*\right)P\left(B^*\right)|\Bigg)\\\\
\leq& \displaystyle \sum_{\substack{i_s-\kappa<i^*_s\leq i_s\\ j_s-\kappa<j^*_s\leq j_s}}\left(\alpha_{n,l}+g(l)\right),
	\end{array}
	\end{eqnarray}
	which allows to conclude that $D(u_n)$ holds for $\{Y_n\}$ with $l_n^{(Y)}=l_n+\kappa$.
\end{proof}

The tendency for clustering of values of $\{Y_n\}$ above $u_n$ depends on the same tendency within $\{X_n\}$ and the propensity of $\{U_n\}$ for consecutive null values. The clustering tendency can be assessed through the extremal index (Leadbetter, \cite{Leadbetter1974} 1974). More precisely, $\{X_n\}$ is said to have extremal index $\theta_X\in (0,1]$ if
\begin{eqnarray}\label{thetaDef}
\lim_{n\to\infty}P\left(\bigvee_{i=1}^{n}X_i\leq n/\tau\right)=e^{-\theta_X\tau}.
\end{eqnarray}

If $D(u_n)$ holds for $\{X_n\}$, we have
\begin{eqnarray}\nonumber
\lim_{n\to\infty}P\left(\bigvee_{i=1}^{n}X_i\leq u_n\right)=\lim_{n\to\infty}P^{k_n}\left(\bigvee_{i=1}^{[n/k_n]}X_i\leq u_n\right),
\end{eqnarray}
for any integers sequence $\{k_n\}$, such that, 
\begin{eqnarray}\label{kn}
k_n\to\infty,\,\,k_nl_n/n\to 0 \textrm{ and } k_n\alpha_{n,l_n}\to 0, \textrm{ as } n\to\infty. 
\end{eqnarray}
We can therefore say that
\begin{eqnarray}\nonumber
\theta_X\tau=\lim_{n\to\infty} k_n P\left(\bigvee_{i=1}^{[n/k_n]}X_i> u_n\right)\,.
\end{eqnarray}
Now we compare the local behavior of sequences $\{X_n\}$ and $\{Y_n\}$, i.e., of $X_i$ and $Y_i$ for $i\in\left\{(j-1)\left[\frac{n}{k_n}\right]+1,...,j\left[\frac{n}{k_n}\right]\right\}$, $j=1,...,k_n$, with regard to the oscillations of their values in relation to $u_n$. To this end, we will use local dependency conditions $D^{(s)}(u_n)$. We say that $\{X_n\}$ satisfies $D^{(s)}(u_n)$, $s\geq 2$, whenever
\begin{eqnarray}\nonumber
\lim_{n\to\infty} n \sum_{j=s}^{[n/k_n]} P\left(X_1> u_n, X_j\leq u_n<X_{j+1}\right)=0,
\end{eqnarray}
for some integers sequence  $\{k_n\}$ satisfying (\ref{kn}). Condition $D^{(1)}(u_n)$ translates into
\begin{eqnarray}\nonumber
\lim_{n\to\infty} n \sum_{j=2}^{[n/k_n]} P\left(X_1> u_n, X_{j}> u_n\right)=0,
\end{eqnarray}
and is known as condition $D^{'}(u_n)$ (Leadbetter \emph{et al.}, \cite{Leadbetter+1983} 1983), related to a unit extremal index, i.e., absence of  extreme values clustering. In particular, this is the case of independent variables. Although $\{X_n\}$ satisfies $D^{'}(u_n)$, this condition is not generally valid for $\{Y_n\}$. Observe that
\begin{eqnarray}\nonumber
\begin{array}{rl}
&\displaystyle n \sum_{j=2}^{[n/k_n]} P\left(Y_1> u_n, Y_{j}> u_n\right)\\\\
=&\displaystyle \sum_{i=2-\kappa}^{1} n \sum_{j=2}^{[n/k_n]}\sum_{j^*=i\vee (j-\kappa+1)}^{j} P\left(X_i> u_n, X_{j^*}> u_n\right)\cdot\\
&\hspace{4cm}\cdot p_{i,...,1,j^*,j^*+1,...,j}(0,...,0,1,0,...,0)\,.
\end{array}
\end{eqnarray}
For $i=1$ and $j=\kappa$, we have $j^*=1$ and the corresponding term becomes $nP(X_1>u_n)\to\tau>0$, as $n\to\infty$, reason why, in general $\{Y_n\}$ does not satisfy $D^{'}(u_n)$ even if $\{X_n\}$ satisfies it.

\begin{pro}\label{pD's}
	The following statements hold:
	\begin{itemize}
		\item [(i)] If $\{Y_n\}$ satisfies $D^{(s)}(u_n)$, $s\geq 2$, then $\{X_n\}$ satisfies $D^{(s)}(u_n)$.
		\item [(ii)] If $\{X_n\}$ satisfies $D^{(s)}(u_n)$, $s\geq 2$, then $\{Y_n\}$ satisfies $D^{(s+\kappa-1)}(u_n)$.
		\item [(iii)] If $\{X_n\}$ satisfies $D^{'}(u_n)$, then $\{Y_n\}$ satisfies $D^{(2)}(u_n)$.
	\end{itemize}
\end{pro}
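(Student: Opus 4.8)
The plan is to translate exceedance and up\nobreakdash-crossing events for $\{Y_n\}$ into ones for $\{X_n\}$, using that $Y_i=X_{N_i}$ for some index $N_i\in\{i-\kappa+1,\dots,i\}$ together with the independence of $\{X_n\}$ and $\{U_n\}$. The observation common to all three parts is that a crossing $\{Y_j\le u_n<Y_{j+1}\}$ forces $U_{j+1}=1$ (otherwise $Y_{j+1}=Y_j$), so $Y_{j+1}=X_{j+1}$, while $Y_1=X_i$ with $i\in\{2-\kappa,\dots,1\}$ and $Y_j=X_{j^*}$ with $j^*\in\{j-\kappa+1,\dots,j\}$ and $X_{j^*}\le u_n$. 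Parts (ii) and (iii) then go through an upper bound, part (i) through the reverse correspondence.

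For (ii) I would use $\{X_{j^*}\le u_n<X_{j+1}\}\subseteq\bigcup_{m=j^*}^{j}\{X_m\le u_n<X_{m+1}\}$ (take the last down\nobreakdash-crossing below $u_n$ before $j+1$); conditioning on the relevant $U$\nobreakdash-configuration, using independence, and bounding each $U$\nobreakdash-probability by $1$ gives
\[
P\left(Y_1>u_n,\,Y_j\le u_n<Y_{j+1}\right)\le\sum_{i,\,j^*,\,m}P\left(X_i>u_n,\,X_m\le u_n<X_{m+1}\right),
\]
with $i\in\{2-\kappa,\dots,1\}$, $j^*\in\{j-\kappa+1,\dots,j\}$, $m\in\{j^*,\dots,j\}$. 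By stationarity each summand equals $P(X_1>u_n,X_{m-i+1}\le u_n<X_{m-i+2})$, and when $j\ge s+\kappa-1$ one has $m-i+1\ge j-\kappa+1\ge s$, so every term is of the type controlled by $D^{(s)}(u_n)$ for $\{X_n\}$. Since each crossing index is produced by only a bounded (in $\kappa$) number of triples $(i,j^*,m)$, multiplying by $n$ and summing gives $n\sum_{j\ge s+\kappa-1}P(Y_1>u_n,\dots)\le C(\kappa)\,n\sum_{l\ge s}P(X_1>u_n,X_l\le u_n<X_{l+1})\to0$, which is $D^{(s+\kappa-1)}(u_n)$. Part (iii) is the same with $s=1$, except that $D'(u_n)=D^{(1)}(u_n)$ lets me skip the down\nobreakdash-crossing expansion: here $\{Y_1>u_n,\,Y_j\le u_n<Y_{j+1}\}\subseteq\{X_i>u_n,\,X_{j+1}>u_n\}$ on the $U$\nobreakdash-configuration, and since $i\le1<j+1$ for $j\ge2$ these are genuinely distinct exceedances (no diagonal term survives). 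Bounding $U$\nobreakdash-probabilities by $1$ and using stationarity (gap $\ge j\ge2$) yields $n\sum_{j=2}^{[n/k_n]}P(Y_1>u_n,\dots)\le\kappa\,n\sum_{l\ge3}P(X_1>u_n,X_l>u_n)\to0$ by $D'(u_n)$, i.e.\ $D^{(2)}(u_n)$ for $\{Y_n\}$; the $O(\kappa)$ shift of the summation range is absorbed by a harmless adjustment of $\{k_n\}$.

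For (i) I would run the correspondence backwards. On $\{U_1=1,\,U_j=1,\,U_{j+1}=1\}$ one has $Y_1=X_1$, $Y_j=X_j$, $Y_{j+1}=X_{j+1}$, so by independence $P(X_1>u_n,\,X_j\le u_n<X_{j+1})\,P(U_1=1,U_j=1,U_{j+1}=1)=P(Y_1>u_n,\,Y_j\le u_n<Y_{j+1},\,U_1=1,U_j=1,U_{j+1}=1)\le P(Y_1>u_n,\,Y_j\le u_n<Y_{j+1})$. Setting $c=\inf_{j\ge s}P(U_1=1,U_j=1,U_{j+1}=1)$ and summing against $n$ gives $c\,n\sum_{j\ge s}P(X_1>u_n,X_j\le u_n<X_{j+1})\le n\sum_{j\ge s}P(Y_1>u_n,\dots)\to0$, so $\{X_n\}$ satisfies $D^{(s)}(u_n)$ as soon as $c>0$.

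I expect the main obstacle to be precisely this positivity in (i): stripping off the $U$\nobreakdash-factor needs the uniform lower bound $c>0$. Strong mixing of $\{U_n\}$ gives $P(U_1=1,U_j=1,U_{j+1}=1)\to P(U_1=1)\,P(U_1=1,U_2=1)$ as $j\to\infty$ (using $P(U_j=1,U_{j+1}=1)=P(U_1=1,U_2=1)$), and $P(U_1=1)>0$ follows from hypothesis~(3), so the bound holds for all large $j$ provided two consecutive records have positive probability; the finitely many small\nobreakdash-$j$ configurations must likewise be non\nobreakdash-degenerate. By contrast the bookkeeping in (ii)--(iii), namely the bounded multiplicity of the index triples and the finite shift of the summation range, is entirely routine.
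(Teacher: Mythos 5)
Your argument is essentially the paper's own: condition on the $U$-configuration, use the independence of $\{X_n\}$ and $\{U_n\}$ to factor the probabilities, bound the $U$-factors by one for (ii)--(iii) after locating a down--up crossing of $\{X_n\}$ at distance at least $s$ from the exceedance, and for (i) isolate the configuration $U_1=U_j=U_{j+1}=1$. The positivity issue you flag in (i) is real, and the paper's proof silently assumes it --- it discards the factor $p_{1,j,j+1}(1,1,1)$ when passing from the vanishing of the full nonnegative sum to the vanishing of $n\sum_{j\geq s}P(X_1>u_n,\,X_j\leq u_n<X_{j+1})$, which requires exactly the uniform lower bound you describe --- so your explicit caveat is, if anything, more careful than the source.
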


\begin{proof}
	Consider $r_n=[n/k_n]$. We have that
\begin{eqnarray}\label{pD'sExp1}
\begin{array}{rl}
&\displaystyle n \sum_{j=s}^{r_n} P\left(Y_1> u_n, Y_{j}\leq u_n<Y_{j+1}\right)
\\\\
=&\displaystyle \sum_{i=2-\kappa}^{1} n \sum_{j=s}^{r_n}P\left(X_i> u_n, Y_{j}\leq u_n<X_{j+1},U_i=1,U_{i+1}=0=...=U_1,U_{j+1}=1\right)\\\\
=&\displaystyle \sum_{i=2-\kappa}^{1} n \sum_{j=s}^{r_n}\sum_{j^*=(i+1)\vee (j-\kappa+1)}^{j} P\left(X_i> u_n, X_{j^*}\leq u_n<X_{j+1}\right)\cdot\\
&\hspace{4cm}\cdot p_{i,i+1,...,1,j^*,j^*+1,...,j,j+1}(1,0,...,0,1,0,...,0,1)\,.
\end{array}
\end{eqnarray}	
Since $\{Y_n\}$ satisfies $D^{(s)}(u_n)$, with $s\geq 2$, and thus the first summation in (\ref{pD'sExp1}) converges to zero, as $n\to\infty$, then all the terms in the last summations also converge to zero. 
In particular, when $i=1$ and $j^*=j$, we have  $n \sum_{j=s}^{r_n} P\left(X_1> u_n, X_{j}\leq u_n<X_{j+1}\right)\to 0$, as $n\to\infty$, which proves (i).

On the other hand, writing the first summation in (\ref{pD'sExp1}) with $j$ starting at $s+\kappa-1$, we have
\begin{align}
&\displaystyle n \sum_{j=s+\kappa-1}^{r_n} P\left(Y_1> u_n, Y_{j}\leq u_n<Y_{j+1}\right)
\nonumber\\
=&\displaystyle \sum_{i=2-\kappa}^{1} n \sum_{j=s+\kappa-1}^{r_n}\sum_{j^*=j-\kappa+1}^{j} P\left(X_i> u_n, X_{j^*}\leq u_n<X_{j+1}\right)\cdot\nonumber\\
&\hspace{4cm}\cdot p_{i,i+1,...,1,j^*,j^*+1,...,j,j+1}(1,0,...,0,1,0,...,0,1)\nonumber\\
=&\displaystyle \sum_{i=2-\kappa}^{1} n \sum_{j=s+\kappa-1}^{r_n}\sum_{j^*=j-\kappa+1}^{j}\sum_{i^*=j^*}^{j} P\left(X_i> u_n, X_{j^*}\leq u_n,...,X_{i^*}\leq u_n,X_{j+1}>u_n\right)\nonumber\cdot\\
&\hspace{4cm}\cdot p_{i,i+1,...,1,j^*,j^*+1,...,j,j+1}(1,0,...,0,1,0,...,0,1)\label{pD'sExp2}\,,
\end{align}	
where the least of distances between $i$ and $i^*$ corresponds to the case $i=1$ and $i^*=j^*=s$. Therefore, if $\{X_n\}$ satisfies $D^{(s)}(u_n)$ for some $s\geq 2$ then each term of (\ref{pD'sExp2}) converges to zero, as $n\to\infty$, and thus  $\{Y_n\}$ satisfies $D^{(s+\kappa-1)}(u_n)$, proving (ii).

As for (iii), observe that
\begin{align}
&\displaystyle n \sum_{j=2}^{r_n} P\left(Y_1> u_n, Y_{j}\leq u_n<Y_{j+1}\right)
\nonumber\\
=&\displaystyle \sum_{i=2-\kappa}^{1} n \sum_{j=2}^{r_n}P\left(X_i> u_n, Y_{j}\leq u_n<X_{j+1},U_i=1,U_{i+1}=0=...=U_1,U_{j+1}=1\right)\nonumber\\
\leq &\displaystyle \sum_{i=2-\kappa}^{1} n \sum_{j=2}^{r_n} P\left(X_i> u_n, X_{j+1}>u_n\right)
=\sum_{i=2-\kappa}^{1} n \sum_{j=2}^{r_n} P\left(X_1> u_n, X_{j-i+2}>u_n\right)\nonumber\\
\leq &\displaystyle \,\,\kappa\, n \sum_{j=2}^{r_n} P\left(X_1> u_n, X_{j}>u_n\right)\label{pD'sExp3}\,.
\end{align}
If $\{X_n\}$ satisfies $D^{'}(u_n)$, then (\ref{pD'sExp3}) converges to zero, as $n\to\infty$, and $D^{(2)}(u_n)$ holds for $\{Y_n\}$.
\end{proof}

Under conditions $D(u_n)$ and $D^{(s)}(u_n)$ with $s\geq 2$, we can also compute the extremal index $\theta_X$ defined in (\ref{thetaDef}) by (Chernick \emph{et al.}, \cite{Chernick+1991} 1991; Corollary 1.3) 
\begin{eqnarray}\label{thetaChernick}
\theta_X =\displaystyle \lim_{n\to\infty}P\left(X_{2}\leq u_n,...,X_{s}\leq u_n|X_1>u_n\right)\,.
\end{eqnarray}

If $\{X_n\}$ and $\{Y_n\}$ have extremal indexes $\theta_X$ and $\theta_Y$, respectively, then $\theta_Y\leq \theta_X$, since $P(\bigvee_{i=1}^{n}X_i\leq n/\tau)\leq P(\bigvee_{i=1}^{n}Y_i\leq n/\tau)$. This corresponds to the intuitively expected, if we remember that the possible repetition of variables $X_n$ leads to larger clusters of values above $u_n$. In the following result, we establish a relationship between $\theta_X$ and $\theta_Y$.

\begin{pro}\label{pthetaYX}
	Suppose that $\{U_n\}$ is strong-mixing and $\{X_n\}$ satisfies conditions $D(u_n)$ and $D^{(s)}(u_n)$, $s\geq 2$, for normalized levels $u_n\equiv u_n(\tau)$. If $\{X_n\}$ has extremal index $\theta_X$ then $\{Y_n\}$ has extremal index $\theta_Y$ given by
	\begin{eqnarray}\nonumber
	\theta_Y=\theta_X\,\sum_{j=0}^{\kappa-1} p_{1,2,...,j+1,j+2}(1,0,...,0,1)\,\beta_j,
	\end{eqnarray}
	where 
	\begin{eqnarray}\nonumber
	\beta_j =\lim_{n\to\infty} P(X_{s+j}>u_n|X_1\leq u_n,...,X_{s-1}\leq u_n<X_s)\,.
	\end{eqnarray}
\end{pro}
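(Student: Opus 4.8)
The plan is to show that $\{Y_n\}$ admits an extremal index and then to compute it through the Chernick \emph{et al.} characterization (\ref{thetaChernick}). By Proposition \ref{pD}, $\{Y_n\}$ inherits $D(u_n)$, and by Proposition \ref{pD's}(ii) it satisfies $D^{(s+\kappa-1)}(u_n)$. Writing $s'=s+\kappa-1$, these two facts let me apply (\ref{thetaChernick}) to $\{Y_n\}$ and write
\[
\theta_Y=\lim_{n\to\infty}\frac{P\big(Y_1>u_n,\,Y_2\le u_n,\dots,Y_{s'}\le u_n\big)}{P(Y_1>u_n)},
\]
recalling that $P(Y_1>u_n)=P(X_1>u_n)$ and $nP(Y_1>u_n)\to\tau$. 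The whole task is therefore to evaluate the numerator asymptotically.

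First I would decompose the numerator according to the refresh pattern of $\{U_n\}$, exploiting the independence assumption (2). The value $Y_1$ equals $X_i$ for the last refresh time $i=\max\{k\le 1:U_k=1\}$, and hypothesis (3) forces $i\in\{2-\kappa,\dots,1\}$; moreover $Y_1>u_n\ge Y_2$ forces $U_2=1$, so $Y_2=X_2$ and, for $2\le m\le s'$, each $Y_m$ equals $X_k$ for a refresh time $k\in\{2,\dots,s'\}$. Hence, on the event under study, $\{Y_2\le u_n,\dots,Y_{s'}\le u_n\}$ coincides with the event that $X_k\le u_n$ for every refresh time $k$ in $\{2,\dots,s'\}$. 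Conditioning on the positions of the ones of $\{U_n\}$ and using the independence of $\{X_n\}$ and $\{U_n\}$, the numerator splits into a sum, over $i$ and over the admissible forward configurations of $\{U_n\}$, of products of a purely $U$-probability and a purely $X$-probability of the form $P(X_i>u_n,\bigcap_{k\in R}X_k\le u_n)$, with $R$ the set of refresh times in $\{2,\dots,s'\}$. Reparametrising by $j=1-i\in\{0,\dots,\kappa-1\}$ and shifting indices by stationarity of $\{U_n\}$, the $U$-factor collapses to $p_{1,2,\dots,j+1,j+2}(1,0,\dots,0,1)$, exactly the weight appearing in the statement.

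It then remains to show that, for each fixed $j$, the corresponding normalised $X$-probability converges to $\theta_X\,\beta_j$. Here I would use condition $D^{(s)}(u_n)$ for $\{X_n\}$ to confine the cluster of exceedances carrying $X_i$ to a window of length less than $s$, so that forward refresh times lying at distance at least $s$ contribute sub-threshold factors tending to $1$ and may be summed out freely; stationarity then aligns the initiating exceedance with position $1$ and the first forward refresh with the lag $s+j$ entering $\beta_j$. The factor $\theta_X$ should emerge from the ``no return'' part of the event via the Chernick form (\ref{thetaChernick}) for $\{X_n\}$, while $\beta_j=\lim_n P(X_{s+j}>u_n\mid X_1\le u_n,\dots,X_{s-1}\le u_n<X_s)$ should capture the forward continuation of the cluster at the relevant lag. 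Summing the resulting terms $\theta_X\,p_{1,\dots,j+2}(1,0,\dots,0,1)\,\beta_j$ over $j=0,\dots,\kappa-1$ yields the claimed expression. I expect the delicate point to be precisely this last identification: one must disentangle the two-sided cluster structure of $\{X_n\}$ around the initiating exceedance from the refresh pattern of $\{U_n\}$, control the joint $X$-probabilities uniformly in $R$ under $D^{(s)}(u_n)$, and verify that the cross terms (configurations in which the $X$-cluster reaches a forward refresh time) are asymptotically negligible, so that the factorisation into $\theta_X$ and $\beta_j$ is legitimate.
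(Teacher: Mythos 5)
Your route differs from the paper's. You want to apply the Chernick \emph{et al.} characterization (\ref{thetaChernick}) directly to $\{Y_n\}$, using Proposition \ref{pD's}(ii) to secure $D^{(s+\kappa-1)}(u_n)$, and then decompose the single joint probability $P(Y_1>u_n,Y_2\le u_n,\dots,Y_{s+\kappa-1}\le u_n)$ over the refresh pattern of $\{U_n\}$. The paper instead stays with the block event $\bigvee_{i=1}^{[n/k_n]}Y_i>u_n$, rewrites it through the upcrossings $\{Y_i\le u_n<Y_{i+1}\}$, decomposes \emph{those} over the refresh pattern, and only then uses $D^{(s)}(u_n)$ for $\{X_n\}$ to reduce everything to the event $\{X_1\le u_n,\dots,X_{s-1}\le u_n<X_s,\,X_{s+j}>u_n\}$, from which $\theta_X$ and $\beta_j$ are read off. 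The two decompositions act on structurally different events: yours places the exceedance at the start of the window and constrains $X$ only at the forward refresh times; the paper's places it at the end of an all-sub-threshold run. So you cannot simply import the paper's final identification to close your argument.

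And that final identification is precisely where your proposal has a genuine gap, not a deferrable technicality. In your decomposition the $X$-factor attached to a configuration is $P(X_{1-j}>u_n,\bigcap_{k\in R}X_k\le u_n)$ with $R$ the set of refresh times in $\{2,\dots,s+\kappa-1\}$, while the $U$-factor is the probability of the \emph{whole} configuration, backward and forward parts together. The collapse of the $U$-factor to $p_{1,2,\dots,j+1,j+2}(1,0,\dots,0,1)$ requires the normalized $X$-factor to have a limit independent of the forward configuration, and then that common limit gets weighted by the full marginal sum over forward configurations -- there is no mechanism left to produce the extra factor $\beta_j$. Test your scheme on independent $\{X_n\}$, where $\theta_X=1$, $\beta_0=1$ and $\beta_j=0$ for $j\ge1$: every $X$-factor equals $P(X_1>u_n)(1+o(1))$ whatever $R$ is, the $U$-probabilities of all admissible configurations sum to $P(U_2=1)=p_n(1)$, and your numerator over $P(Y_1>u_n)$ tends to $p_n(1)$, whereas the target expression $\theta_X\sum_j p_{1,2,\dots,j+1,j+2}(1,0,\dots,0,1)\beta_j$ equals $p_{1,2}(1,1)$. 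The two quantities you would need to equate at the ``delicate point'' are therefore not equal even in the simplest case, so the asserted factorization into $\theta_X$ and $\beta_j$ cannot emerge from the computation as you have set it up; you would need either a different decomposition of the Chernick event or a re-examination of the target formula itself before the proof can be completed.
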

\begin{proof}
By Proposition \ref{pD}, $\{Y_n\}$ also satisfies condition $D(u_n)$. Thus we have
\begin{eqnarray}\nonumber
\lim_{n\to\infty} P\left(\bigvee_{i=1}^{n}Y_i\leq u_n\right)=\exp\left\{-\lim_{n\to\infty} k_n P\left(\bigvee_{i=1}^{[n/k_n]}Y_i> u_n\right)\right\}
\end{eqnarray}
and
\begin{align}
&\displaystyle \lim_{n\to\infty} k_n P\left(\bigvee_{i=1}^{[n/k_n]}Y_i> u_n\right)
= \displaystyle \lim_{n\to\infty} k_n P\left(Y_1\leq u_n, \bigvee_{i=1}^{[n/k_n]}\{Y_i> u_n\}\right)\nonumber\\
= & \displaystyle \lim_{n\to\infty} k_n P\left(\bigcup_{i=1}^{[n/k_n]}\{Y_i\leq u_n<Y_{i+1}\}\right)
=\displaystyle \lim_{n\to\infty} k_n P\left(\bigcup_{i=1}^{[n/k_n]}\{Y_i\leq u_n<X_{i+1},U_{i+1}=1\}\right)\nonumber\\
=&\displaystyle \lim_{n\to\infty} k_n P\left(\bigcup_{i=1}^{[n/k_n]}\bigcup_{j=0}^{\kappa-1}\{X_{i-j}\leq u_n<X_{i+1},U_{i-j}=1,U_{i-j+1}=0=...=U_{i},U_{i+1}=1\}\right)\nonumber\\
=&\displaystyle \lim_{n\to\infty} k_n P\left(\bigcup_{i=1}^{[n/k_n]}\bigcup_{j=0}^{\kappa-1}\{X_{i}\leq u_n<X_{i+j+1},U_{i}=1,U_{i+1}=0=...=U_{i+j},U_{i+j+1}=1\}\right)\nonumber\\
=&\displaystyle \lim_{n\to\infty} k_n \sum_{i=1}^{[n/k_n]}\sum_{j=0}^{\kappa-1}P\left(X_1\leq u_n,...,X_{i}\leq u_n<X_{i+1},X_{i+j+1}>u_n\right)\cdot\nonumber\\
&\hspace{4cm}\cdot p_{i,i+1,...,i+j,i+j+1}(1,0,...,0,1)\label{pthetasExp1}\\
=&\displaystyle \lim_{n\to\infty} k_n \sum_{i=1}^{[n/k_n]}\sum_{j=0}^{\kappa-1}P\left(X_{i-s+2}\leq u_n,...,X_{i}\leq u_n<X_{i+1},X_{i+j+1}>u_n\right)\cdot\nonumber\\
&\hspace{4cm}\cdot p_{1,2,...,j+1,j+2}(1,0,...,0,1)\nonumber
\end{align}
since $\{X_n\}$ satisfies condition $D^{(s)}(u_n)$ for some $s\geq 2$. The stationarity of $\{X_n\}$ leads to
\begin{equation}\nonumber
\begin{array}{rl}
&\displaystyle \lim_{n\to\infty} k_n \sum_{i=1}^{[n/k_n]}\sum_{j=0}^{\kappa-1}P\left(X_{i-s+2}\leq u_n,...,X_{i}\leq u_<X_{i+1},X_{i+j+1}>u_n\right)\cdot\nonumber\\
&\hspace{4cm}\cdot p_{1,2,...,j+1,j+2}(1,0,...,0,1)\\
=&\displaystyle \lim_{n\to\infty} k_n \sum_{i=1}^{[n/k_n]}\sum_{j=0}^{\kappa-1}P\left(X_{1}\leq u_n,...,X_{s-1}\leq u_n<X_{s},X_{s+j}>u_n\right)\cdot\nonumber\\
&\hspace{4cm}\cdot p_{1,2,...,j+1,j+2}(1,0,...,0,1)\\
=&\displaystyle \lim_{n\to\infty} \sum_{j=0}^{\kappa-1}n\,P\left(X_{1}\leq u_n,...,X_{s-1}\leq u_n<X_{s},X_{s+j}>u_n\right)\cdot\nonumber\\
&\hspace{4cm}\cdot p_{1,2,...,j+1,j+2}(1,0,...,0,1)\\
=&\displaystyle \lim_{n\to\infty} \sum_{j=0}^{\kappa-1}n\,P\left(X_{1}\leq u_n,...,X_{s-1}\leq u_n<X_{s}\right)P\left(X_{s+j}>u_n|X_{1}\leq u_n,...,X_{s-1}\leq u_n<X_{s}\right)\cdot\nonumber\\
&\hspace{4cm}\cdot p_{1,2,...,j+1,j+2}(1,0,...,0,1)\\
=& \displaystyle \tau\, \theta_X \sum_{j=0}^{\kappa-1} p_{1,2,...,j+1,j+2}(1,0,...,0,1)\,\beta_j,
\end{array}
\end{equation}
where the last step follows from (\ref{thetaChernick}).
\end{proof}

Observe that $\sum_{j=0}^{\kappa-1} p_{1,2,...,j+1,j+2}(1,0,...,0,1)=p_n(1)=P(U_n=1)$ and thus $\theta_Y\leq \theta_Xp_n(1)\leq \theta_X$, as expected.

\begin{pro}
Suppose that $\{U_n\}$ is strong-mixing and $\{X_n\}$ satisfies conditions $D(u_n)$ and $D^{'}(u_n)$, for normalized levels $u_n\equiv u_n(\tau)$. Then $\{Y_n\}$ has extremal index $\theta_Y$ given by $\theta_Y=p_{1,2}(1,1)$.
\end{pro}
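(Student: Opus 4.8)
The plan is to obtain this as a direct specialization of Proposition \ref{pthetaYX}, exploiting that $D'(u_n)$ is precisely $D^{(1)}(u_n)$, the strongest of the local conditions. The first step is to record two consequences of assuming $D'(u_n)$ for $\{X_n\}$. On the one hand, $D'(u_n)$ forces a unit extremal index, so $\theta_X=1$ (as recalled just before Proposition \ref{pD's}). On the other hand, $D'(u_n)$ implies $D^{(s)}(u_n)$ for every $s\geq 2$, because $\{X_j\leq u_n<X_{j+1}\}\subseteq\{X_{j+1}>u_n\}$ gives
\begin{equation}\nonumber
n\sum_{j=s}^{[n/k_n]}P\left(X_1>u_n,\,X_j\leq u_n<X_{j+1}\right)\leq n\sum_{j=s}^{[n/k_n]}P\left(X_1>u_n,\,X_{j+1}>u_n\right)\longrightarrow 0,
\end{equation}
the right-hand side being a shifted tail of the $D'(u_n)$ sum (the single extra large-lag term being negligible under the mixing condition $D(u_n)$). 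Together with the assumed $D(u_n)$, this shows the hypotheses of Proposition \ref{pthetaYX} are met, and I may invoke its formula with the convenient choice $s=2$.

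With $s=2$ the formula reads $\theta_Y=\theta_X\sum_{j=0}^{\kappa-1}p_{1,2,\dots,j+1,j+2}(1,0,\dots,0,1)\,\beta_j$, where now $\beta_j=\lim_{n\to\infty}P(X_{2+j}>u_n\mid X_1\leq u_n<X_2)$, and the heart of the argument is to evaluate these $\beta_j$. The term $j=0$ is immediate, since the conditioning event already contains $\{X_2>u_n\}$, whence $\beta_0=1$. For $j\geq 1$ I would prove $\beta_j=0$ by normalising numerator and denominator by $n$ separately: the denominator satisfies $nP(X_1\leq u_n<X_2)\to\tau\theta_X=\tau$ (the normalisation already used inside the proof of Proposition \ref{pthetaYX}, with $\theta_X=1$ here), while for the numerator stationarity and $D'(u_n)$ give
\begin{equation}\nonumber
nP\left(X_1\leq u_n<X_2,\,X_{2+j}>u_n\right)\leq nP\left(X_2>u_n,\,X_{2+j}>u_n\right)=nP\left(X_1>u_n,\,X_{1+j}>u_n\right)\longrightarrow 0,
\end{equation}
the last limit holding for each fixed $j\geq 1$ because the corresponding single term of the $D'(u_n)$ sum (at lag $1+j\geq 2$) must vanish. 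Hence $\beta_j=0$ for every $j\geq 1$.

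Substituting $\theta_X=1$, $\beta_0=1$ and $\beta_j=0$ $(j\geq 1)$ collapses the sum to its first term and yields $\theta_Y=p_{1,2}(1,1)$, as claimed. I expect the only delicate point to be the vanishing of the $\beta_j$ for $j\geq 1$: one must pass from the full $D'(u_n)$ sum to the single-lag statement $nP(X_1>u_n,X_{1+j}>u_n)\to 0$ and keep track that the denominator carries the correct normalisation $\tau$, which is exactly where $\theta_X=1$ enters. Everything else is bookkeeping, and one further check is that the conclusion does not depend on the choice of $s$ (repeating the numerator bound for general $s$ again gives $\beta_0=1$ and $\beta_j=0$). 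The answer is also consistent with the bound $\theta_Y\leq\theta_X p_n(1)$ noted after Proposition \ref{pthetaYX}, since here $\theta_X=1$ and $p_{1,2}(1,1)$ is a single nonnegative summand of $p_n(1)$.
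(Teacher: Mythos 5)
Your proof is correct and follows essentially the paper's own route: the paper's proof observes that under $D^{'}(u_n)$ only the $j=0$ term survives in the display (\ref{pthetasExp1}) from the proof of Proposition \ref{pthetaYX}, which is exactly your computation that $\beta_0=1$ and $\beta_j=0$ for $j\geq 1$, and the paper even records your specialization explicitly in the remark that follows (there with the informal choice $s=1$; your rigorous use of $s=2$ after checking that $D^{'}(u_n)$ implies $D^{(2)}(u_n)$ is the cleaner variant). No gaps.
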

\begin{proof}
	By condition $D^{'}(u_n)$, the only term to consider in (\ref{pthetasExp1}) corresponds to $j=0$, and we obtain
	\begin{eqnarray}\nonumber
	\begin{array}{rl}
&\displaystyle\lim_{n\to\infty} k_n\,P\left(\bigvee_{i=1}^{[n/k_n]}Y_n\leq u_n\right)\\
=&\displaystyle\lim_{n\to\infty} k_n\,\sum_{i=1}^{[n/k_n]}P\left(X_1\leq u_n,...,X_{s-1}\leq u_n<X_s\right)p_{1,2}(1,1)\\
=&\displaystyle\lim_{n\to\infty} n\,P(X_s>u_n)\,p_{1,2}(1,1)=\tau \,p_{1,2}(1,1)\,.
	\end{array}
	\end{eqnarray}	
\end{proof}

Observe that we can obtain the above result by applying Proposition \ref{pD's} (iii) and calculating directly 
$\tau\, \theta_Y=\lim_{n\to\infty}n\, P(Y_1\leq u_n<Y_2)$. More precisely, we have that $\{Y_n\}$ satisfies $D^{(2)}(u_n)$ and by applying (\ref{thetaChernick}), we obtain
\begin{eqnarray}\nonumber
\begin{array}{rl}
\tau\, \theta_Y=&\displaystyle\lim_{n\to\infty} n\, P(Y_1\leq u_n<Y_2)\\
=&\displaystyle\lim_{n\to\infty} n\, P(Y_1\leq u_n<X_2,U_2=1)\\
=&\displaystyle\lim_{n\to\infty} nP\left(\bigcup_{j=0}^{\kappa-1}X_{1-j}\leq u_n<X_2,U_{1-j}=1,U_{1-j+1}=0=...=U_{1},U_2=1,\right)\\
=&\displaystyle\lim_{n\to\infty} nP\left(\bigcup_{j=0}^{\kappa-1}X_{2-\kappa}\leq u_n,...,X_{1-j}\leq u_n<X_{2-j},X_2>u_n\right)\cdot\\
&\hspace{4cm}\cdot p_{1-j,1-j+1,...,1,2}(1,0,...,0,1)\\
=&\displaystyle\lim_{n\to\infty} n\,P(X_1\leq u_n<X_2)\,p_{1,2}(1,1)\\
=&\displaystyle\lim_{n\to\infty} n\,P(X_2>u_n)\,p_{1,2}(1,1)=\tau \,p_{1,2}(1,1)\,.
\end{array}
\end{eqnarray}

The same result can also be seen as a particular case of Proposition \ref{pthetaYX} where, if we take $s=1$, we have $\beta_j=0$, for $j\not=0$, and we obtain $\theta_Y=\theta_X\beta_0 p_{1,2}(1,1)=p_{1,2}(1,1)$, since  $\beta_0=1$ and under $D^{'}(u_n)$ it comes $\theta_X=1$. \\

\begin{ex}\label{exARMAXtheta}
	Consider $\{Y_n\}$ such that $\{X_n\}$ is an ARMAX sequence, i.e., $X_n=\phi X_{n-1}\vee (1-\phi)Z_n$, $n\geq 1$, where $\{Z_n\}$ is an independent sequence of random variables with standard Fr\'echet marginal distribution and $\{X_n\}$ and $\{Z_n\}$ are independent. We have that $\{X_n\}$ has also standard Fr\'echet marginal distribution, satisfies condition $D^{(2)} (u_n)$ and has extremal index $\theta_X=1-\phi$ (see e.g.~Ferreira and Ferreira \cite{Ferreira+2012} 2012 and references therein). 
	
	Observe that, for normalized levels $u_n\equiv n/\tau$, $\tau>0$, we have
	\begin{eqnarray}\nonumber
	\begin{array}{rl}
	\beta_1=&{\displaystyle \lim_{n\to\infty}}P(X_3>u_n|X_1\leq u_n<X_2)\\
	=&{\displaystyle  \lim_{n\to\infty}}\frac{P(X_1\leq u_n)-P(X_1\leq u_n,X_2\leq u_n)-P(X_1\leq u_n,X_3\leq u_n)+P(X_1\leq u_n,X_2\leq u_n,X_3\leq u_n)}{P(X_1\leq u_n)-P(X_1\leq u_n,X_2\leq u_n)}\\
	=&{\displaystyle  \lim_{n\to\infty}}\frac{1-\frac{\tau}{n}-(1-\frac{\tau}{n}(2-\phi))-(1-\frac{\tau}{n}(2-\phi^2))+1-\frac{\tau}{n}(3-2\phi)}{1-\frac{\tau}{n}-(1-\frac{\tau}{n}(2-\phi))}\\
	=&\phi\,.
	\end{array}
	\end{eqnarray}
	Analogous calculations lead to $\beta_2=\phi^2$. Considering $\kappa=3$, we have $\theta_Y=(1-\phi)(p_{1,2}(1,1)+\phi p_{1,2,3}(1,0,1)+\phi^2p_{1,2,3,4}(1,0,0,1))$.
\end{ex}

The observed sequence is $\{Y_n\}$, therefore results that allow retrieving information about the extreme behavior of the initial sequence $\{X_n\}$, subject to the failures determined by $\{U_n\}$, may be of interest.

If we assume that $\{Y_n\}$ satisfies $D^{(s)}(u_n)$ then $\{X_n\}$ also satisfies $D^{(s)}(u_n)$ by Proposition \ref{pD's} (i), thus coming
\begin{eqnarray}\nonumber
\begin{array}{rl}
\tau\,\theta_X=&\displaystyle\lim_{n\to\infty} n\,P(X_1\leq u_n,...,X_{s-1}\leq u_n<X_s)\\
=&\displaystyle\lim_{n\to\infty} n\,P(Y_1\leq u_n,...,Y_{s-1}\leq u_n<Y_s|U_1=...=U_s=1)\\
=&\displaystyle\lim_{n\to\infty} n\,P(Y_1\leq u_n,...,Y_{s-1}\leq u_n<Y_s|Y_0\not=Y_1\not=...\not=Y_s).
\end{array}
\end{eqnarray}
Thereby, we can write
\begin{eqnarray}\nonumber
\begin{array}{rl}
\theta_X=&\displaystyle\lim_{n\to\infty} \frac{P(Y_1\leq u_n,...,Y_{s-1}\leq u_n<Y_s|Y_0\not=Y_1\not=...\not=Y_s)}{P(Y_1>u_n)}.
\end{array}
\end{eqnarray}

\section{Tail dependence}\label{stdc}

Now we will analyse the effect of this failure mechanism on the dependency between two variables, $Y_n$ and $Y_{n+m}$, $m\geq 1$. More precisely, we are going to evaluate the lag-$m$ tail dependence coefficient
\begin{eqnarray}\nonumber
\lambda(Y_{n+m}|Y_n)=\lim_{x\to\infty}P(Y_{n+m}>x|Y_n>x),
\end{eqnarray}
which incorporates the tail dependence between $X_n$ and $X_{n+j}$, with $j$ regulated by the maximum number of failures $\kappa-1$ and by the relation between $m$ and $\kappa$. In particular, independent variables present null tail dependence coefficients. If $m=1$ we obtain the tail dependence coefficient in Joe (\cite{Joe1997} 1997). For simplicity, we first present the case  $m = 1$ and then we extend the result to any value $m$.

\begin{pro}\label{plambdalag1}
	Sequence $\{Y_n\}$ has tail dependence coefficient
	\begin{eqnarray}\nonumber
	\lambda(Y_{n+1}|Y_n)=p_n(0)+\sum_{i=0}^{\kappa-1}\lambda(X_{n+1+i}|X_n)\,p_{1,2,...,i+1,i+2}(1,0,...,0,1),
	\end{eqnarray}
	provided all coefficients $\lambda(X_{n+1+i}|X_n)$ exist.
\end{pro}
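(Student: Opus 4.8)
The plan is to work directly from the ratio form
$$\lambda(Y_{n+1}|Y_n)=\lim_{x\to\infty}\frac{P(Y_{n+1}>x,\,Y_n>x)}{P(Y_n>x)},$$
using that $P(Y_n>x)=P(X_n>x)$, which follows from the standard Fr\'echet marginal identity for $\{Y_n\}$ already established in Section~\ref{sextremalindex}. The first move is to split the numerator according to the value of $U_{n+1}$: on $\{U_{n+1}=0\}$ we have $Y_{n+1}=Y_n$, so the joint event reduces to $\{Y_n>x,\,U_{n+1}=0\}$; on $\{U_{n+1}=1\}$ we have $Y_{n+1}=X_{n+1}$, giving $\{X_{n+1}>x,\,Y_n>x,\,U_{n+1}=1\}$. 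Thus the numerator is the sum of these two contributions, and I would treat them separately.

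For both contributions the key device is to decompose the event $\{Y_n>x\}$ into the disjoint events indexed by the position $n-i$ of the last available (non-failed) record, namely $\{X_{n-i}>x,\,U_{n-i}=1,\,U_{n-i+1}=0=\dots=U_n\}$ for $i=0,\dots,\kappa-1$; these are mutually exclusive because they specify incompatible $U$-patterns. On each such event the independence of $\{X_n\}$ and $\{U_n\}$ (hypothesis~(2)) lets me factor every probability into an $X$-part and a $U$-part, and stationarity of $\{X_n\}$ lets me replace $P(X_{n-i}>x)$ by $P(X_n>x)$.

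For the $\{U_{n+1}=0\}$ contribution the factorisation gives $P(X_n>x)\sum_{i=0}^{\kappa-1}p_{n-i,\dots,n,n+1}(1,0,\dots,0,0)$. The crucial point is the combinatorial identity $\sum_{i=0}^{\kappa-1}p_{n-i,\dots,n,n+1}(1,0,\dots,0,0)=P(U_{n+1}=0)=p_n(0)$: the events on the left are exactly the decomposition of $\{U_{n+1}=0\}$ according to the last $1$ preceding it, and the would-be top term $i=\kappa-1$ (which would place a $1$ followed by $\kappa$ zeros) vanishes by hypothesis~(3). Dividing by $P(Y_n>x)=P(X_n>x)$ and passing to the limit then yields the term $p_n(0)$ exactly, with no residual $x$-dependence.

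For the $\{U_{n+1}=1\}$ contribution the same decomposition produces $\sum_{i=0}^{\kappa-1}P(X_{n+1}>x,\,X_{n-i}>x)\,p_{n-i,\dots,n,n+1}(1,0,\dots,0,1)$. Here $X_{n+1}$ and $X_{n-i}$ are separated by lag $i+1$, so dividing the $i$-th summand by $P(X_{n-i}>x)=P(X_n>x)$ and letting $x\to\infty$ produces the lag-$(i+1)$ tail dependence coefficient $\lambda(X_{n+1+i}|X_n)$ (by stationarity), while the $U$-pattern probability is $p_{1,2,\dots,i+1,i+2}(1,0,\dots,0,1)$, again by stationarity of $\{U_n\}$. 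Summing over $i$ and adding the first contribution gives the claimed formula, the interchange of limit and finite sum being immediate. I expect the main obstacle to be the first contribution: one must verify carefully that hypothesis~(3) makes the last-$1$ decomposition of $\{U_{n+1}=0\}$ exhaustive (so the sum collapses to $p_n(0)$ exactly and not merely a bound), and this is precisely where the ``no $\kappa$ consecutive failures'' assumption enters; the assumed existence of each $\lambda(X_{n+1+i}|X_n)$ is what guarantees the second contribution converges termwise.
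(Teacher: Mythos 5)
Your proposal is correct and follows essentially the same route as the paper's own proof: split the joint event on the value of $U_{n+1}$, decompose $\{Y_n>x\}$ by the position of the last non-failed record, factor via the independence of $\{X_n\}$ and $\{U_n\}$, and pass to the limit termwise. The only cosmetic difference is that you carry the $i=\kappa-1$ term in the $\{U_{n+1}=0\}$ sum and argue it vanishes by hypothesis~(3), whereas the paper truncates the sum at $i=\kappa-2$ from the outset; your explicit verification that the last-one decomposition of $\{U_{n+1}=0\}$ is exhaustive (so the sum equals $p_n(0)$) is a point the paper leaves implicit.
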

\begin{proof} We have that
	\begin{eqnarray}\nonumber
	\begin{array}{rl}
&\displaystyle\lim_{x\to\infty}\frac{P(Y_n>x,Y_{n+1}>x)}{P(Y_{n}>x)}\\
=&\displaystyle\lim_{x\to\infty}\frac{P(Y_n>x,U_{n+1}=0)}{P(Y_{n}>x)}+\lim_{x\to\infty}\frac{P(Y_n>x,X_{n+1}>x,U_{n+1}=1)}{P(Y_{n}>x)}\\
=&\displaystyle\lim_{x\to\infty}\frac{\sum_{i=0}^{\kappa-2}P(X_{n-i}>x)\,p_{n-i,n-i+1,...,n+1}(1,0,...,0)}{P(Y_{n}>x)}\\
&+\displaystyle\lim_{x\to\infty}\frac{\sum_{i=0}^{\kappa-1}P(X_{n-i}>x,X_{n+1}>x)\, p_{n-i,n-i+1,...,n,n+1}(1,0,...,0,1)}{P(Y_{n}>x)}\\
=&\displaystyle \sum_{i=0}^{\kappa-2}p_{1,2,...,i+2}(1,0,...,0)+\sum_{i=0}^{\kappa-1}\lambda(X_{n+1+i}|X_n)\,p_{1,2,...,i+1,i+2}(1,0,...,0,1)\,.
	\end{array}
	\end{eqnarray}
\end{proof}

\begin{pro}\label{plambdalags}
	Sequence $\{Y_n\}$ has lag-$m$ tail dependence coefficient, with $m\geq 1$,
	\begin{eqnarray}\label{TDClag-m}
\begin{array}{rl}
	\displaystyle \lambda(Y_{n+m}|Y_n)=&p_{1,...,m}(0,...,0)\,\mathbf{1}_{\{m\leq \kappa-1\}}+\displaystyle \sum_{i=1\vee (m-\kappa+1)}^{m}\sum_{i^*=0}^{\kappa-1}\lambda(X_{n+i+i^*}|X_n)\cdot\\
	&\hspace{4cm}\displaystyle\cdot p_{1,2,...,i^*+1,i^*+1+i,i^*+2+i,...,i^*+1+m}(1,0,...,0,1,0,...,0),
\end{array}
	\end{eqnarray}
	provided all coefficients $\lambda(X_{n+i+i^*}|X_n)$ exist.
\end{pro}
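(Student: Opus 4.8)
The plan is to mirror the lag-$1$ computation of Proposition \ref{plambdalag1}, starting from $\lambda(Y_{n+m}|Y_n)=\lim_{x\to\infty}P(Y_n>x,Y_{n+m}>x)/P(Y_n>x)$ and splitting the numerator according to whether the clock is renewed somewhere in $\{n+1,\dots,n+m\}$. First I would isolate the ``frozen'' event $\{U_{n+1}=\cdots=U_{n+m}=0\}$, on which $Y_{n+m}=Y_n$, so that $\{Y_n>x,Y_{n+m}>x\}$ reduces to $\{Y_n>x\}$. By hypothesis (3) this event is impossible once $m\ge\kappa$, which is exactly where the factor $\mathbf{1}_{\{m\le\kappa-1\}}$ originates; for $m\le\kappa-1$ I would decompose $Y_n$ by its source $X_{n-i^*}$ and use independence of $\{X_n\}$ and $\{U_n\}$ to write this contribution as $\sum_{i^*=0}^{\kappa-1-m}P(X_{n-i^*}>x)\,p_{n-i^*,\dots,n+m}(1,0,\dots,0)$. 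Dividing by $P(Y_n>x)$, each ratio $P(X_{n-i^*}>x)/P(Y_n>x)$ equals $1$ (both marginals are standard Fr\'echet, so $P(Y_n>x)=P(X_n>x)$, and $\{X_n\}$ is stationary), and the surviving sum of $U$-probabilities collapses, by partitioning on the position of the most recent renewal at or before $n$, to $P(U_{n+1}=\cdots=U_{n+m}=0)=p_{1,\dots,m}(0,\dots,0)$ by stationarity, producing the first term.

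Next I would treat the complementary event, where the last renewal at or before $n+m$ occurs at some time $n+i$ with $U_{n+i}=1$ and $U_{n+i+1}=\cdots=U_{n+m}=0$; here $Y_{n+m}=X_{n+i}$. Writing also $Y_n=X_{n-i^*}$ with $U_{n-i^*}=1$, $U_{n-i^*+1}=\cdots=U_n=0$, the joint event factorises, by independence of $\{X_n\}$ and $\{U_n\}$, into $P(X_{n-i^*}>x,X_{n+i}>x)$ times the probability of the $U$-pattern ``$1$ at $n-i^*$, zeros up to $n$, $1$ at $n+i$, zeros up to $n+m$''. The key observation, which I would stress, is that the values $U_{n+1},\dots,U_{n+i-1}$ strictly between the two renewals are left unconstrained by these events, since they are overwritten by the later renewal at $n+i$; this is precisely why the subscript in (\ref{TDClag-m}) lists the positions $1,\dots,i^*+1$ and then jumps to $i^*+1+i,\dots,i^*+1+m$, marginalising over the intermediate indices. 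Enumerating the admissible source positions gives $i^*\in\{0,\dots,\kappa-1\}$ (the number of steps since the last renewal at or before $n$) and $i\in\{1\vee(m-\kappa+1),\dots,m\}$, the lower bound $m-\kappa+1$ being forced by the at-most-$\kappa-1$ trailing zeros and the bound $1$ by the renewal lying strictly after $n$.

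Finally I would pass to the limit in each off-diagonal term: since $P(Y_n>x)=P(X_{n-i^*}>x)$, we have $P(X_{n-i^*}>x,X_{n+i}>x)/P(Y_n>x)\to\lambda(X_{n+i}|X_{n-i^*})$, and stationarity of $\{X_n\}$ shifts this (by $i^*$) to $\lambda(X_{n+i+i^*}|X_n)$, matching the statement; the $U$-probabilities are rewritten in relative coordinates, placing the source of $Y_n$ at position $1$, as $p_{1,2,\dots,i^*+1,\,i^*+1+i,\dots,i^*+1+m}(1,0,\dots,0,1,0,\dots,0)$ by stationarity of $\{U_n\}$. Collecting the frozen and renewal contributions then yields (\ref{TDClag-m}). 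I expect the main difficulty to be organisational rather than analytic: correctly enumerating the renewal positions, pinning down the two summation ranges, and justifying that the intermediate $U$'s drop out, so that the stated $p$-subscript with its omitted block is indeed the correct marginal. The limiting arguments themselves are identical to those already used in Proposition \ref{plambdalag1} and require only the Fr\'echet tail equivalence and the assumed existence of the coefficients $\lambda(X_{n+i+i^*}|X_n)$.
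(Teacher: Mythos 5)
Your proposal is correct and follows essentially the same route as the paper's proof: split $P(Y_n>x,\,Y_{n+m}>x)$ according to whether $U_{n+1}=\cdots=U_{n+m}=0$ (possible only for $m\leq\kappa-1$) or the last renewal in the window occurs at $n+i$, further decompose on the source $X_{n-i^*}$ of $Y_n$, factor by independence of $\{X_n\}$ and $\{U_n\}$, and pass to the limit using the common Fr\'echet marginals, with the sum of the frozen-case $U$-probabilities collapsing to $p_{1,\dots,m}(0,\dots,0)$. Your added remark that the intermediate values $U_{n+1},\dots,U_{n+i-1}$ are marginalised out is left implicit in the paper's notation but is exactly what its subscript convention encodes.
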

\begin{proof} Observe that
	\begin{eqnarray}\nonumber
	\begin{array}{rl}
	&\displaystyle P(Y_n>x,Y_{n+m}>x)\\
	=&\displaystyle P(Y_n>x,U_{n+1}=0=...=U_{n+m})\mathbf{1}_{\{m\leq \kappa-1\}}\\
	&+\displaystyle\sum_{i=1\vee (m-\kappa+1)}^{m} P(Y_n>x,X_{n+i}>x,U_{n+i}=1,U_{n+i+1}=0=...=U_{n+m})\\
	=&\displaystyle \sum_{i=0}^{\kappa-1-m}P(X_{n-i}>x)\,p_{n-i,n-i+1,...,n+m}(1,0,...,0)\mathbf{1}_{\{m\leq \kappa-1\}}\\
	&+\displaystyle \sum_{i=1\vee (m-\kappa+1)}^{m}\sum_{i^*=0}^{\kappa-1} P(X_{n-i^*}>x,X_{n+i}>x)\, p_{n-i^*,n-i^*+1,...,n,n+i,n+i+1,...,n+m}(1,0,...,0,1,0,...,0)\,
	\end{array}
	\end{eqnarray}
and 
$\displaystyle \sum_{i=0}^{\kappa-1-m}p_{1,2,...,m+i+1}(1,0,...,0)=p_{1,...,m}(0,...,0).$
\end{proof}
Taking $m=1$ in (\ref{TDClag-m}), we immediately obtain the result of Proposition \ref{plambdalag1}. 

If $\{X_n\}$ is lag-$m^*$ tail independent for all integer $m^*\geq 1\vee(m-\kappa+1)$, we have $\lambda(X_{n+i+i^*}|X_n)=0$ in the second ter of (\ref{TDClag-m}) and thus $\lambda(Y_{n+m}|Y_n)=p_{1,...,m}(0,...,0)\,\mathbf{1}_{\{m\leq \kappa-1\}}$ and $\{Y_n\}$ is lag-$m$ tail independent for all integer $m\geq \kappa$.

\begin{ex}\label{exARMAXTDClag-m}
	Consider again $\{Y_n\}$ based on ARMAX sequence $\{X_n\}$ as in Example \ref{exARMAXtheta}. We have that $\{X_n\}$ has lag-$m$ tail dependence coefficient $\lambda(X_{n+m}|X_n)=\phi^m$ (Ferreira and Ferreira \cite{Ferreira+2012} 2012) and thus 
	\begin{eqnarray}\nonumber
	\begin{array}{rl}
	\displaystyle \lambda(Y_{n+m}|Y_n)=&p_{1,...,m}(0,...,0)\,\mathbf{1}_{\{m\leq \kappa-1\}}\\
	&+\displaystyle \sum_{i=1\vee (m-\kappa+1)}^{m}\sum_{i^*=0}^{\kappa-1}\phi ^{i+i^*} p_{1,2,...,i^*+1,i^*+1+i,i^*+2+i,...,i^*+1+m}(1,0,...,0,1,0,...,0).
	\end{array}
	\end{eqnarray}
\end{ex}

\bigskip

\end{document}